\theoremstyle{plain}
\newtheorem{theorem}{Theorem}[section]
\newtheorem{lemma}[theorem]{Lemma}
\newtheorem{corollary}[theorem]{Corollary}
\newtheorem*{MT}{Main Theorem}
\author{Rafael Villarroel-Flores\\ Universidad Autónoma del Estado de Hidalgo\\ Carretera Pachuca-Tulancingo km. 4.5\\ Pachuca 42184 Hgo. México\\ email: \texttt{rafaelv@uaeh.edu.mx}\\ MSC: 05C76\\ keywords: iterated clique graphs, convergent graphs\\ Partially supported by CONACYT, grant A1-S-45528.}
\date{\today}
\title{On the clique behavior of graphs of low degree}
\begin{document}

\maketitle
\begin{abstract}
To any simple graph \(G\), the clique graph operator \(K\) associates the graph \(K(G)\) which is the intersection graph of the maximal complete subgraphs of \(G\). The iterated clique graphs are defined by \(K^{0}(G)=G\) and \(K^{n}(G)=K(K^{n-1}(G))\) for \(n\geq 1\). If there are \(m<n\) such that \(K^{m}(G)\) is isomorphic to \(K^{n}(G)\) we say that \(G\) is convergent, otherwise, \(G\) is divergent. The first example of a divergent graph was shown by Neumann-Lara in the 1970s, and is the graph of the octahedron. In this paper, we prove that among the connected graphs with maximum degree 4, the octahedron is the only one that is divergent.
\end{abstract}

\section{Introduction}
\label{sec:org77fc68a}

All graphs in this paper are finite and simple. Following \cite{MR0256911-djvu}, a \emph{clique} in a graph \(G\) is a maximal and complete subgraph of \(G\). The \emph{clique graph} of \(G\), denoted by \(K(G)\), is the intersection graph of its cliques. We define the sequence of \emph{iterated clique graphs} by \(K^{0}(G)=G\) and \(K^{n}(G)=K(K^{n-1}(G))\) for \(n\geq 1\). We say that the graph \(G\) is \emph{convergent} if the sequence of iterated clique graphs has, up to isomorphism, a finite number of graphs. This is equivalent to the existence of \(m<n\) such that \(K^{m}(G)\) is isomorphic to \(K^{n}(G)\) and to the fact that the set of orders of the iterated clique graphs is bounded. If \(G\) is not convergent we say that \(G\) is \emph{divergent}. The graph of the octahedron was the first known example of a divergent graph, given by Neumann-Lara in \cite{zbMATH03641500}.

The problem of determining the \emph{behavior} of \(G\) under iterated applications of the clique operator is one of the main topics in this theory, as the clique graph operator is considered one of the most complex graph operators (\cite{MR1379114-djvu}). There are many families of graphs for which criterions have been proved, and in some cases the behavior can even be determined in polynomial time (see for example \cite{1999-larrion-neumann-lara-clique-divergent-graphs-with-unbounded-sequence-of-diameters}, \cite{MR1746456}, \cite{MR2193079}, \cite{MR2489268}, \cite{MR2002076}, \cite{chessboard-graphs}, \cite{MR2652002}, \cite{2004-larrion-mello-morgana-neumann-lara-pizana-the-clique-operator-on-cographs-and-serial-graphs}, \cite{2013-frias-armenta-larrion-neumann-lara-pizana-edge-contraction-and-edge-removal-on-iterated-clique-graphs}, \cite{2021-baumeister-limbach-clique-dynamics-of-locally-cyclic-graphs-with-elta-geq-6}), however the problem has been found to be undecidable for automatic graphs (\cite{2021-cedillo-pizana-clique-convergence-is-undecidable-for-automatic-graphs}).

We denote the maximum degree in \(G\) as \(\Delta(G)\). The purpose of this paper is to show the following:

\begin{MT}
If \(G\) is a connected graph with \(\Delta(G)\leq 4\) and \(G\) is not the graph of the octahedron, then \(G\) is convergent.
\end{MT}

We use standard notation for graph theory, such as \(x\sim y\) whenever the vertices \(x,y\) are adjacent, \(N(x)\) for the set of neighbors of the vertex \(x\), \(N[x]=N(x)\cup\{x\}\), and \(|A|\) for the cardinality of the set \(A\). Throughout the paper, we usually identify a subset of the vertex set of a graph with the subgraph that it induces. When we say that two sets \emph{meet}, we mean that they have non-empty intersection. A complete subgraph will be called just a \emph{complete}. The difference of the sets \(A,B\) is denoted as \(A-B\). 

\section{Preliminaries}
\label{sec:org5b29f54}

We say that a collection of sets is \emph{intersecting} if any two members of the collection has nonempty intersection. The collection has the \emph{Helly property} if any intersecting subcollection has nonempty intersection. A graph \(G\) is \emph{Helly} if the collection of the cliques of \(G\) has the Helly property. 

The following theorem gives a criterion for convergence.

\begin{theorem}
\label{helly-convergent}
(\cite{MR0329947}) If a graph \(G\) is Helly, then \(G\) is convergent.
\end{theorem}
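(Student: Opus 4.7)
The plan is to show that the sequence of orders $(|V(K^n(G))|)_{n\geq 0}$ remains bounded, since by the definition of convergence this forces the sequence of iterated clique graphs to contain only finitely many isomorphism types. I would prove two intermediate lemmas, both leveraging the Helly hypothesis.

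The first lemma says that the clique operator preserves the Helly property: if $G$ is Helly, so is $K(G)$. The key is a structural description of cliques of $K(G)$. A clique of $K(G)$ is a maximal intersecting family $\mathcal{F}$ of cliques of $G$; applying the Helly property of $G$ produces a vertex $v \in \bigcap \mathcal{F}$, and a short maximality argument then identifies $\mathcal{F}$ with $\mathcal{Q}_v$, the set of cliques of $G$ containing $v$ (any clique of $G$ containing $v$ automatically meets every member of $\mathcal{F}$ at $v$, hence lies in $\mathcal{F}$ by maximality). With this description in hand, an intersecting family $\{\mathcal{Q}_{v_1}, \ldots, \mathcal{Q}_{v_k}\}$ of cliques of $K(G)$ forces each pair $v_i, v_j$ to share a common clique of $G$, so $\{v_1, \ldots, v_k\}$ spans a complete in $G$, which extends to some clique $C$ lying in every $\mathcal{Q}_{v_i}$, establishing Helly in $K(G)$.

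The second lemma bounds $|V(K^2(G))|$ by $|V(G)|$. For each $\mathcal{F} \in V(K^2(G))$, choose $v(\mathcal{F}) \in \bigcap \mathcal{F}$ via Helly; the structural description gives $\mathcal{F} = \mathcal{Q}_{v(\mathcal{F})}$, so $\mathcal{F}$ is uniquely determined by $v(\mathcal{F})$. Concretely, if $v(\mathcal{F}) = v(\mathcal{F}')$, then $\mathcal{F} = \mathcal{Q}_{v(\mathcal{F})} = \mathcal{Q}_{v(\mathcal{F}')} = \mathcal{F}'$, and the map $\mathcal{F} \mapsto v(\mathcal{F})$ is an injection $V(K^2(G)) \hookrightarrow V(G)$.

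Combining: by the first lemma every $K^n(G)$ is Helly, and applying the second lemma to $K^n(G)$ yields $|V(K^{n+2}(G))| \le |V(K^n(G))|$ for all $n \geq 0$. Hence the orders are bounded by $\max\{|V(G)|, |V(K(G))|\}$ and $G$ is convergent. The main obstacle, in my view, is the structural identity $\mathcal{F} = \mathcal{Q}_v$ underlying both lemmas: this is where the Helly hypothesis genuinely does the work, and one must check carefully that $\mathcal{Q}_v$ is intersecting (immediate, since every clique in it contains $v$) and that the maximality of $\mathcal{F}$ together with the Helly witness $v$ really forces $\mathcal{Q}_v \subseteq \mathcal{F}$, rather than $\mathcal{F}$ being some smaller intersecting family that merely happens to share the vertex $v$.
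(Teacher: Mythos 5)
Your proof is correct and is essentially the classical argument from the cited source (Escalante): the paper itself gives no proof of this theorem, only the reference \cite{MR0329947}. The key structural identity --- that every clique of \(K(G)\) equals a star \(v^{*}\) when \(G\) is Helly, whence \(K(G)\) is again Helly and \(|V(K^{2}(G))|\leq|V(G)|\) --- is exactly the standard route, and your maximality argument for \(v^{*}\subseteq\mathcal{F}\) is sound.
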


Not every connected graph with \(\Delta(G)\leq 4\) different from the octahedron is Helly. However, we will prove that for any such graph, one has that \(K^{2}(G)\) is Helly. In order to do that, we will have to study the vertices of \(K^{2}(G)\), that is, the cliques of cliques of \(G\). We will pick some terminology from \cite{MR2002076}, but for the convenience of the reader, we include all the relevant definitions here.

Given \(x\in G\), we define the \emph{star} of \(x\), denoted by \(x^{*}\), as the set of cliques of \(G\) that contain \(x\), that is, \(x^{ * }=\{q\in K(G)\mid x\in q\}\). This is a complete subgraph of \(K(G)\). If \(x^{ *} \in K^{2}(G)\), we say that \(x\) is a \emph{normal vertex}. If \(Q\in K^{2}(G)\) is such that \(\cap Q=\emptyset\), we say that \(Q\) is a \emph{necktie} of \(G\). Thus, vertices of \(K^{2}(G)\) are partitioned in stars and neckties of \(G\), and \(G\) is Helly precisely when \(G\) has no neckties.

A \emph{triangle} in a graph \(G\) is a complete with three vertices. A triangle \(T\) in \(G\) will be called here an \emph{inner triangle} if \(T\) is a clique of \(G\) and for any of the edges of \(T\) there is a triangle \(T'\ne T\) such that \(T\cap T'\) is precisely that edge. Given an inner triangle \(T\) in \(G\), we define \(Q_{T}=\{q\in K(G)\mid |q\cap T|\geq 2\}\). This is always a complete in \(K(G)\). We will prove that if \(G\) is a graph with \(\Delta(G)\leq 4\), and \(G\) is not the octahedron, then \(Q_{T}\) is always a necktie, and also that for any necktie \(Q\) there is a triangle \(T\) of \(G\), such that \(Q=Q_{T}\). In this case, we will say that \(T\) is the \emph{center} of the necktie \(Q_{T}\), and any other element of the necktie that is not the center, will be called an \emph{ear}.

A graph \(G\) is \emph{hereditary Helly} if all its induced subgraphs are Helly. Hence any hereditary Helly graph is Helly. The strategy of the proof of the Main Theorem will be to prove that if \(G\) satisfies the hypothesis, then \(K^{2}(G)\) is hereditary Helly. We will use the characterization proved in \cite{1993-prisner-hereditary-clique-helly-graphs} as reformulated in \cite{LP08}.

\begin{theorem}
\label{theorem-hch}
(Theorem 2.1 in \cite{1993-prisner-hereditary-clique-helly-graphs}) A graph \(G\) is hereditary Helly if and only if it is compatible with the diagram in Figure \ref{diagram-hch}, in the sense that whenever the graph with the solid edges is a subgraph of \(G\), then one of the dashed edges is induced.
\end{theorem}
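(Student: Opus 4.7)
The plan is to prove Theorem \ref{theorem-hch} (a restatement of Prisner's characterization) in two directions, after first reducing the Helly property of a graph to a purely three-clique condition.

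Step 1 (reduction). I would first show that a graph $H$ is Helly if and only if every three pairwise intersecting cliques of $H$ have a common vertex. One direction is immediate from the definition. For the converse, I would take a minimal intersecting family $\{C_1,\dots,C_n\}$ of cliques with empty total intersection; if $n\ge 4$ and every triple has a common vertex, then $C_1\cap C_2$ and $C_1\cap C_3$ are nonempty, and one shows by pushing the argument through pairwise intersections that some triple of the original family already has empty common intersection, contradicting minimality.

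Step 2 (forward direction). Suppose the induced configuration of Figure \ref{diagram-hch} sits inside $G$ with none of the dashed edges present; label the central triangle $\{a,b,c\}$ and the three extra vertices $a',b',c'$ in the standard way (so $a'$ sees $b,c$ but not $a$, and analogously). Then the three triangles $\{b,c,a'\}$, $\{a,c,b'\}$, $\{a,b,c'\}$ extend to cliques of the induced subgraph on $\{a,b,c,a',b',c'\}$; since the missing dashed edges force these three cliques to meet pairwise in single vertices of $\{a,b,c\}$ and to have empty triple intersection, the induced subgraph on these six vertices is not Helly, so $G$ is not hereditary Helly.

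Step 3 (reverse direction). Assume $G$ has an induced subgraph $H$ that is not Helly. By Step 1, take three cliques $C_1,C_2,C_3$ of $H$ meeting pairwise with empty common intersection. Pick $u_k\in C_i\cap C_j$ for $\{i,j,k\}=\{1,2,3\}$; these three vertices are distinct and pairwise adjacent, so they form a triangle $T=\{u_1,u_2,u_3\}$. For each $i$ pick a vertex $v_i\in C_i$ that witnesses maximality of $C_i$, i.e.\ $v_i$ is in $C_i$ but $v_i\notin C_j$ for $j\neq i$; then $v_i$ is adjacent to both vertices of $T$ other than $u_i$ (since those lie in $C_i$) but is not adjacent to $u_i$, and a careful choice (replacing $v_i$ if necessary, using that $C_j$ is a clique) makes $v_1,v_2,v_3$ pairwise non-adjacent. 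The six vertices $u_1,u_2,u_3,v_1,v_2,v_3$ then exhibit the forbidden configuration of Figure \ref{diagram-hch} in $G$ without any dashed edge being induced.

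The main obstacle is Step 3: the vertices $v_i$ chosen from the cliques $C_i$ might satisfy spurious adjacencies (among themselves, or to the wrong $u_i$) that correspond to one of the dashed edges actually being induced, which would no longer give a counterexample to the compatibility condition. The standard trick is to start from a \emph{minimal} triple of offending cliques and, when an unwanted adjacency appears, to rebuild one of the cliques around the offending vertex so as to kill that adjacency while preserving pairwise intersection and empty triple intersection; verifying that this process terminates and produces witnesses with exactly the required adjacency pattern is the technical heart of the argument.
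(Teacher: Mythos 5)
The paper does not actually prove this statement; it quotes it from Prisner's 1993 paper as reformulated by Larri\'on and Piza\~na, so your proposal has to stand on its own. It does not, because Step 1 is false: ``every three pairwise intersecting cliques have a common vertex'' does not imply the Helly property for the clique family. Take $K_4$ on $\{1,2,3,4\}$ and add four vertices $z_1,\dots,z_4$, with $z_i$ adjacent exactly to $\{1,2,3,4\}\setminus\{i\}$. The cliques of this graph are $\{1,2,3,4\}$ and the four sets $C_i=(\{1,2,3,4\}\setminus\{i\})\cup\{z_i\}$; every three of these cliques share a vertex, yet $C_1\cap C_2\cap C_3\cap C_4=\emptyset$. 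The correct reduction (Berge--Duchet, or Szwarcfiter's extended-triangle criterion) is to triples of \emph{vertices}, not triples of cliques: a clique family is Helly if and only if for every triangle $\{x,y,z\}$ the subfamily of cliques containing at least two of $x,y,z$ has a common element. Consequently your Step 3 cannot begin with ``three cliques meeting pairwise with empty common intersection,'' since a non-Helly induced subgraph need not supply such a triple. The standard repair is to take a \emph{minimal} intersecting family $C_1,\dots,C_n$ of cliques of the induced subgraph with empty total intersection, set $u_i\in\bigcap_{j\neq i}C_j$ for $i=1,2,3$ (these are distinct, pairwise adjacent, and $u_i\notin C_i$ by minimality), and then for each $i$ use maximality of $C_i$ to produce $v_i\in C_i$ that is \emph{non-adjacent} to $u_i$.

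That last point is also where your Step 3 is imprecise even in the three-clique case: choosing $v_i\in C_i$ with $v_i\notin C_j$ for $j\neq i$ does not guarantee $v_i\not\sim u_i$, since $v_i$ and $u_i$ could lie together in some clique outside the family; you must instead choose $v_i$ as a vertex of $C_i$ witnessing that $C_i\cup\{u_i\}$ is not complete, which exists precisely because $u_i\notin C_i$ and $C_i$ is maximal. Conversely, the difficulty you single out as the ``technical heart''---arranging for $v_1,v_2,v_3$ to be pairwise non-adjacent---is a non-issue for the statement as formulated here: the theorem only asks that the solid edges occur as a (not necessarily induced) subgraph and that the three dashed edges $u_iv_i$ be absent, so the adjacencies among the $v_i$ are unconstrained. (They would matter only for Prisner's original formulation as a finite list of forbidden \emph{induced} subgraphs.) Your Step 2 is essentially correct: with the dashed edges absent, each of $\{a,b',c'\}$, $\{b,a',c'\}$, $\{c,a',b'\}$ is already a maximal complete of the induced subgraph on the six vertices, and these three cliques meet pairwise but have empty total intersection.
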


\begin{figure}
\centering
\begin{tikzpicture}
    [vertex/.style={circle, fill, draw, inner sep=0pt, minimum size=4pt},
     edge/.style={thin},
     edashed/.style={dashed, thin}]
    \node at (-1,0) [vertex] (a) {};
    \node at (0,0) [vertex] (c') {};
    \node at (1,0) [vertex] (b) {};
    \node at (-1/2,{sqrt(3)/2}) [vertex] (b') {};
    \node at (1/2,{sqrt(3)/2}) [vertex] (a') {};
    \node at (0,{sqrt(3)}) [vertex] (c) {};
    \draw[edge] (a) -- (c') -- (b) -- (a') -- (c) -- (b') -- (a);
    \draw[edge] (a') -- (b') -- (c') -- (a');
    \draw[edashed] (a) -- (a');
    \draw[edashed] (b) -- (b');
    \draw[edashed] (c) -- (c');
  \end{tikzpicture}
\caption{Hajós diagram \label{diagram-hch}}
\end{figure}
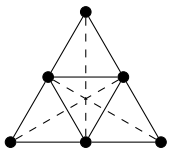

\section{Neckties in low-degree graphs}
\label{sec:org9de2f08}

In what follows, we will call a connected graph \(G\) with \(\Delta(G)\leq 4\) and different from the octahedron, a \emph{low degree graph}.

Note that because of the degree condition on \(G\), the cliques of \(G\) can have at most five vertices. However, the only connected low degree graph that has a clique of five vertices is precisely \(K_{5}\), which is convergent. Hence, for our purposes we may assume that all cliques of \(G\) have at most four vertices. Since the graphs that have no neckties are Helly and hence also convergent, we might as well suppose that \(G\) has neckties.

\begin{lemma}
\label{complete-in-union}
Let \(G\) be any graph, and \(q_{1},q_{2}\) be cliques in \(G\) such that if \(\{x,y\}\) is any edge between vertices in \(q_{1}\cup q_{2}\), with \(x\in q_{1}\) and \(y\in q_{2}\), then one of the vertices \(x,y\) is in \(q_{1}\cap q_{2}\). Then if \(c\) is a complete subgraph such that \(c\subseteq q_{1}\cup q_{2}\), then \(c\subseteq q_{1}\) or \(c\subseteq q_{2}\).
\end{lemma}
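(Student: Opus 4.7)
The plan is to prove the contrapositive by extracting a witness edge. Suppose $c \subseteq q_1 \cup q_2$ is complete but $c$ is contained in neither $q_1$ nor $q_2$. Then I can choose vertices $x \in c - q_1$ and $y \in c - q_2$. Since $c \subseteq q_1 \cup q_2$, the first vertex must lie in $q_2$ and the second in $q_1$; in particular $x \ne y$ because $x \notin q_1$ while $y \in q_1$.

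Now I invoke the fact that $c$ is complete: the pair $\{x, y\}$ is an edge of $G$, and both endpoints lie in $q_1 \cup q_2$. The hypothesis on $q_1, q_2$ forces one of $x, y$ to lie in $q_1 \cap q_2$. But $x \notin q_1$ rules out $x \in q_1 \cap q_2$, and $y \notin q_2$ rules out $y \in q_1 \cap q_2$. This contradiction finishes the argument.

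There is no real obstacle: the statement is essentially a bookkeeping observation about two cliques whose only ``cross'' edges are anchored in the common part, and the proof is a one-line contradiction via two well-chosen vertices. I do not expect to need any induction, the Helly property, or the hypothesis $\Delta(G) \le 4$; the lemma is purely combinatorial and will later be applied in the paper to settings where $q_1 \cap q_2$ consists of a single vertex or an edge forced by the local structure of a low-degree graph.
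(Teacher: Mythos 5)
Your proposal is correct and is essentially the paper's own proof: both pick $x\in c-q_{1}$ and $y\in c-q_{2}$ and observe that the edge $\{x,y\}$ would miss $q_{1}\cap q_{2}$, contradicting the hypothesis. You are slightly more careful in checking $x\ne y$, which the paper leaves implicit, but the argument is the same.
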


\begin{proof}
With the given hypothesis, suppose that there is \(x\in c\) such that \(x\not \in q_{1}\) and  that there is \(y\in c\) such that \(y\not\in q_{2}\). Then \(\{x,y\}\) is an edge with \(x\in q_{1}\) and \(y\in q_{2}\), such that \(\{x,y\}\cap (q_{1}\cap q_{2})=\emptyset\). 
\end{proof}

Note that the hypothesis of Lemma \ref{complete-in-union} is satisfied in the case that \(|q_{1}-q_{2}|=1\). Because in that case, if \(\{x\}=q_{1}-q_{2}\), and there was an edge from \(x\) to the vertex \(y\in q_{2}-q_{1}\), then \(q_{1}\cup\{y\}\) would be a complete subgraph properly containing \(q_{1}\).

\begin{lemma}
\label{only-triangles}
If \(G\) is a graph with \(\Delta(G)\leq 4\), then the neckties of \(G\) can contain only triangles.
\end{lemma}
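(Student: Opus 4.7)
The plan is to rule out, one size at a time, every possible clique size other than three, using the fact that $\Delta(G)\le 4$ restricts what a clique of $G$ can look like. Since any clique in such a graph has at most five vertices, I need to eliminate the sizes $1$, $2$, $4$ and $5$.

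The extreme sizes are straightforward. A single-vertex clique $\{v\}$ forces $v$ to be isolated, so it is the only clique meeting it, and any clique of $K(G)$ containing it has nonempty intersection. An edge-clique $q=\{u,v\}$ is maximal precisely because $u$ and $v$ have no common neighbour; if $q$ belonged to a necktie $Q$ then not every element of $Q$ could contain $u$, nor every element could contain $v$, so there would exist $q_1,q_2\in Q$ with $u\in q_1-q_2$ and $v\in q_2-q_1$, and any vertex in $q_1\cap q_2$ would be a common neighbour of $u$ and $v$. Finally, if $|q|\ge 5$ then $|q|=5$ and $q$ uses up all of the degree of each of its vertices, so $q$ is an entire connected component isomorphic to $K_5$, whose clique graph has a single vertex and admits no necktie.

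The interesting case is $|q|=4$, say $q=\{a,b,c,d\}$. The key structural observation is that each vertex of $q$ has at most one neighbour outside $q$, because three of its four available edges are already spent inside $q$. From this I would deduce that any clique $q'\ne q$ meeting $q$ satisfies $|q'-q|=1$: if $q'$ contained two vertices outside $q$, then any vertex of $q\cap q'$ would have two external neighbours. Writing $q'-q=\{v\}$ and $A_v=N(v)\cap q$, so that $q'\cap q\subseteq A_v$, the maximality of $q'$ forces $q'\cap q=A_v$ and hence $q'=\{v\}\cup A_v$, since any $x\in A_v-q'$ would be adjacent to every vertex of $q'$, contradicting maximality.

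Next I would observe that for distinct external vertices $v\ne w$ the associated cliques $q'_v=\{v\}\cup A_v$ and $q'_w=\{w\}\cup A_w$ are disjoint: their intersection lies inside $q$ and equals $A_v\cap A_w$, and a vertex there would have both $v$ and $w$ as distinct external neighbours, contradicting the degree bound. Consequently, a clique $Q$ of $K(G)$ with $q\in Q$ can include at most one such external clique besides $q$ itself, so $\bigcap Q$ is either $q$ or $q\cap q'_v=A_v$, and in both situations is nonempty; thus $Q$ cannot be a necktie. The main obstacle is precisely this size-$4$ case: the whole argument pivots on each vertex of a $4$-clique having a \emph{unique} external neighbour, which is exactly the content of $\Delta(G)\le 4$ when $|q|=4$, and is what collapses the family of cliques meeting $q$ to a pairwise disjoint family.
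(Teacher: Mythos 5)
Your proof is correct, and for the hard case \(|q|=4\) it takes a genuinely different, and arguably cleaner, route than the paper. The paper argues by cases on the size of the intersection of the \(4\)-clique \(q_{1}\) with another member \(q'\) of the necktie: it first excludes \(|q_{1}\cap q'|=1\) by a degree count, then \(|q_{1}\cap q'|=3\), and finally the remaining configuration in which every other member is a triangle meeting \(q_{1}\) in an edge, with several of these steps leaning on Lemma \ref{complete-in-union}. You instead classify outright every clique \(q'\ne q\) meeting the \(4\)-clique \(q\): since each vertex of \(q\) has at most one neighbour outside \(q\), such a \(q'\) has exactly one external vertex \(v\) and must equal \(\{v\}\cup(N(v)\cap q)\), and two such cliques for distinct external vertices are disjoint; hence any intersecting family containing \(q\) has at most two members and nonempty intersection. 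This bypasses Lemma \ref{complete-in-union} entirely and collapses the case analysis into one structural observation, at the cost of not extracting the intermediate facts (e.g.\ that the other members would have to be triangles sharing an edge with \(q_{1}\)) that the paper's longer argument produces in the same style it later uses for neckties made of triangles. Your handling of sizes \(1\), \(2\) and \(5\) is also correct: the edge case is essentially the paper's own argument, and the singleton and \(K_{5}\) cases, which the paper leaves implicit (or disposes of beforehand under a connectivity assumption), are dealt with soundly.
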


\begin{proof}
Let \(Q\) be a necktie in \(G\). If \(q_{1}\in Q\) was an edge \(\{x,y\}\), choose \(q_{2}\in Q\) such that \(y\not\in q_{2}\) and \(q_{3}\in Q\) such that \(x\not\in q_{3}\). Then \(q_{1}\cap q_{2}=\{x\}\) and \(q_{1}\cap q_{3}=\{y\}\). Let \(z\in q_{2}\cap q_{3}\). Then \(\{x,y,z\}\) would be a complete subgraph properly containing \(q_{1}\), which is a contradiction.

Now, suppose \(q_{1}\in Q\) is such that \(|q_{1}|=4\). If there was \(q'\in Q\) such that \(|q_{1}\cap q'|=1\), then, since by the previous paragraph, we have \(|q'|\geq 3\), we would have that if \(x\in q_{1}\cap q'\), then \(N[x]\supseteq q_{1}\cup q'\), and this last set has at least six elements, against our hypothesis. Suppose now that there is \(q_{2}\in Q\) with \(|q_{1}\cap q_{2}|=3\). Then \(|q_{2}|=4\). Suppose that \(q_{1}\cap q_{2}=\{x,y,z\}\). Then there is \(q_{3}\in Q\) such that \(x\not\in q_{3}\). If \(q_{3}\) were disjoint to \(q_{1}\cap q_{2}\), then \(q_{1}\cup q_{2}\) would be a complete of five vertices. Hence we may assume \(y\in q_{3}\). Then \(q_{3}\subseteq N[y]=q_{1}\cup q_{2}\), but by Lemma \ref{complete-in-union} this would imply that either \(q_{3}\subseteq q_{1}\) or that \(q_{3}\subseteq q_{2}\), which is a contradiction.

Hence every clique in \(Q\) other than \(q_{1}\) meets \(q_{1}\) in exactly two elements, and has exactly three elements. Let \(q_{2}\in Q\) such that \(q_{2}\ne q_{1}\). If there was \(q_{3}\in Q\) such that \(q_{3}\cap q_{1}\) is disjoint to \(q_{2}\cap q_{1}\), we would have  \(q_{1}=(q_{1}\cap q_{2})\cup (q_{1}\cap q_{3})\), and if \(z\in q_{2}\cap q_{3}\), then \(z\not\in q_{1}\), but \(q_{1}\cup\{z\}\) is a complete subgraph, contradicting our choice of \(q_{1}\). Hence all cliques in \(Q\) meet \(q_{1}\cap q_{2}\). Suppose \(q_{1}=\{x,y,u,v\}\) and \(q_{2}=\{x,y,w\}\). Let \(q_{3}\in Q\) such that \(y\not\in q_{3}\). Then \(x\in q_{3}\), and if there was \(z'\in q_{3}-(q_{1}\cup q_{2})\), we would have \(\{y,u,v,w,z'\}\subseteq N(x)\), which is a contradiction. Hence \(q_{3}\subseteq q_{1}\cup q_{2}\). By Lemma \ref{complete-in-union}, we would have that \(q_{3}\subseteq q_{1}\) or \(q_{3}\subseteq q_{2}\), which is also a contradiction.
\end{proof}

\begin{lemma}
If \(G\) is a low-degree graph, for any necktie \(Q\) in \(G\) there is an inner triangle \(T\) of \(G\) such that \(Q=Q_{T}\), moreover \(Q_{T}\) has exactly three ears. Conversely, if \(T\) is an inner triangle, then \(Q_{T}\) is indeed a necktie.
\end{lemma}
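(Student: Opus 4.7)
I would prove the two directions separately; the low-degree hypothesis is needed mainly at the very end, to rule out the octahedron.

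For the \emph{converse}, given an inner triangle $T=\{x,y,z\}$, the set $Q_T$ is already a complete subgraph of $K(G)$. To show $\bigcap Q_T=\emptyset$, for each $v\in T$ I use the inner-triangle hypothesis on the opposite edge $T-\{v\}$: there is a triangle $T'\neq T$ with $T\cap T'=T-\{v\}$, any clique $q'$ extending $T'$ belongs to $Q_T$, and $q'$ cannot contain $v$, for otherwise $T\cup(T'-T)$ would be a complete properly extending the clique $T$. For maximality of $Q_T$ as a complete of $K(G)$, suppose $q\in K(G)$ meets every member of $Q_T$: since $T\in Q_T$ we have $q\cap T\neq\emptyset$, and if $|q\cap T|=1$, say $q\cap T=\{x\}$, then $q$ meets a clique $q_{yz}\in Q_T$ containing $\{y,z\}$ at some vertex $v\notin\{y,z\}$, and $v\sim x$ (both lie in $q$) forces $\{x,y,z,v\}$ complete, against the maximality of $T$; hence $|q\cap T|\geq 2$ and $q\in Q_T$.

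For the \emph{forward} direction, Lemma \ref{only-triangles} makes every $q\in Q$ a triangle. I first claim some pair $q_1,q_2\in Q$ meets in exactly one vertex: assuming every pair in $Q$ meets in an edge, I fix $q_0=\{a,b,c\}\in Q$ and partition $Q\setminus\{q_0\}$ by which vertex of $q_0$ each element misses. Since $\bigcap Q=\emptyset$, each of the three classes is nonempty, and picking a representative from each, the edge-sharing hypothesis forces the three ``extra'' vertices of these representatives to coincide into a common vertex $x$ adjacent to $a,b,c$, yielding a $K_4$ and contradicting maximality of $q_0$. Fix now $q_1=\{a,b,c\}$, $q_2=\{b,d,e\}$ with $q_1\cap q_2=\{b\}$; the degree bound forces $N(b)=\{a,c,d,e\}$. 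Pick $q_3\in Q$ with $b\notin q_3$; the configurations where $q_3$ shares an edge with $q_1$ or $q_2$ are ruled out by Lemma \ref{complete-in-union} (they complete a $K_4$ extending $q_1$ or $q_2$), so up to symmetry $q_3=\{a,d,w\}$ with $w\notin\{b,c,e\}$. Propagating the degree bound yields $N(a)=\{b,c,d,w\}$ and $N(d)=\{a,b,e,w\}$, and a direct check shows that $T:=\{a,b,d\}$ has no common neighbor, hence is a clique; since each edge of $T$ lies in one of $q_1,q_2,q_3$, the triangle $T$ is inner.

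To finish, I show any remaining $q^*\in Q$ satisfies $|q^*\cap T|\geq 2$. If $|q^*\cap T|=1$, say $a\in q^*$, then $q^*\subseteq N[a]=\{a,b,c,d,w\}$ together with $q^*\cap T=\{a\}$ forces $q^*=\{a,c,w\}$, which is disjoint from $q_2$ and contradicts $q^*,q_2\in Q$; the cases $q^*\cap T=\{b\}$ and $q^*\cap T=\{d\}$ are symmetric. If $|q^*\cap T|=0$, then meeting $q_1,q_2,q_3$ pins $q^*$ down to $\{c,e,w\}$, and if this really is a triangle-clique then the six vertices $\{a,b,c,d,e,w\}$ with the forced adjacencies have exactly the three non-edges $\{a,e\},\{b,w\},\{c,d\}$, every vertex has degree $4$, and by the degree cap together with connectedness of $G$, the graph $G$ itself must be the octahedron, contradicting the hypothesis. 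Hence $Q\subseteq Q_T$, and maximality of $Q$ gives $Q=Q_T$ (and in particular $T\in Q$); a final degree count on each edge of $T$ shows $Q_T=\{T,q_1,q_2,q_3\}$, with exactly three ears. The main obstacle is this octahedron check in the $|q^*\cap T|=0$ branch: one must verify that closing $\{c,e,w\}$ into a clique saturates the six-vertex subgraph into an octahedron with all degrees full, leaving no room for any further vertex or edge in $G$.
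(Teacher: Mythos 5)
Your proof is correct and follows essentially the same route as the paper: reduce to triangles via Lemma \ref{only-triangles}, locate two members of \(Q\) meeting in a single vertex, build the center triangle out of the three pairwise intersections \(q_1\cap q_3\), \(q_1\cap q_2\), \(q_2\cap q_3\), and exclude the would-be fourth triangle disjoint from the center by showing \(G\) would then be the octahedron, with the converse handled by degree counting. The only quibble is that ruling out \(q_3\) sharing an edge with \(q_1\) or \(q_2\) is not really an application of Lemma \ref{complete-in-union} (its hypothesis is only guaranteed when \(|q_1-q_2|=1\), which fails here since \(q_1\cap q_2\) is a single vertex), but your parenthetical \(K_4\) argument is the correct justification, so nothing is lost.
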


\begin{proof}
We adapt the proof of Proposition 10 in \cite{MR2002076} to this situation. For \(G\) a low-degree graph, let \(Q\) be a necktie of \(G\). From Lemma \ref{only-triangles}, we know that \(Q\) contains only triangles. Let \(T\in Q\). We claim that there must be a triangle in \(Q\) that meets \(T\) in only one vertex. Suppose every other triangle in \(Q\) meets \(T\) at an edge. Let \(T_{1}\in Q\), with \(T_{1}\ne T\) and suppose \(T=\{a,b,c\}\) and \(T_{1}=\{a,b,c'\}\). Now, there must be \(T_{2}\in Q\) such that \(b\not\in T_{2}\). But then \(T_{2}\cap T=\{a,c\}\) and \(T_{2}\cap T_{1}=\{a,c'\}\), which implies \(T_{2}\subseteq T\cup T_{1}\). By Lemma \ref{complete-in-union}, this is not possible.

Let then \(T=\{a,b,c\}\in Q\), and \(T_{1}=\{a,b',c'\}\) with \(T\cap T_{1}=\{a\}\). Choose \(T_{2}\in Q\) such that \(a\not\in T_{2}\). Then \(T_{2}\cap T\) must consist of exactly one vertex, since if \(T_{2}\cap T=\{b,c\}\), then \(T\) would be properly contained in a complete graph of four vertices. Similarly, \(T_{2}\cap T_{1}\) consists of only one vertex. Without loss, assume that \(T_{2}=\{a',b,c'\}\), with \(a'\not\in T\cup T_{1}\). Then \(T_{3}=\{a,b,c'\}\) is a triangle of \(G\). It must be a clique of \(G\) because if, for example, was contained properly in \(\{a,b,c',z\}\), we would have that \(\{b,b',c,c',z\}\subseteq N(a)\), or if, say, we had \(z=c\), then \(T\) would be contained in a tetrahedron. Hence \(T_{3}\) is an inner triangle.

We claim that \(T_{3}\in Q\). If this were not so, there should be \(T_{4}\in Q\) such that \(T_{4}\cap T_{3}=\emptyset\), but that meets \(T\), \(T_{1}\) and \(T_{2}\). We must have then that \(T_{4}\cap T=\{c\}\), \(T_{4}\cap T_{1}=\{b'\}\) and \(T_{4}\cap T_{2}=\{a'\}\). This implies that \(T_{4}=\{a',b',c\}\), and that the subgraph of \(G\) induced by \(\{a,a',b,b',c,c'\}\) is an octahedron. This contradicts our assumption on \(G\), and so this proves our claim that \(T_{3}\in Q\).

We have now that \(\{T,T_{1},T_{2},T_{3}\}\subseteq Q\). Suppose there was a triangle \(T_{4}\) of \(G\), with \(T_{4}\in Q\), and different from \(T,T_{1},T_{2},T_{3}\). We must have \(T_{4}\cap T_{3}\ne\emptyset\), without loss, assume that \(a\in T_{4}\cap T_{3}\). Suppose that \(T_{4}=\{a,r,s\}\). But then \(T_{4}\subseteq N[a]=\{a,b,c,b',c'\}\). It is immediate to verify that any choice of \(r,s\) among \(b,c,b',c'\) leads to contradict either that \(T_{4}\) is different from all of \(T,T_{1},T_{2},T_{3}\), that each of these are cliques, or that \(T_{4}\cap T_{2}\ne\emptyset\). Hence \(Q=\{T,T_{1},T_{2},T_{3}\}=Q_{T_{3}}\), where \(T,T_{1},T_{2}\) are the ears of \(Q_{T_{3}}\).

Now, for the converse, let \(T=\{a,b,c\}\) be an inner triangle. Each of the cliques in \(Q_{T}\) contains at least two vertices of \(T\), so \(Q_{T}\) is a complete in \(K(G)\). Since \(T\) is an internal triangle, there are cliques \(q_{1}, q_{2}, q_{3}\) in \(G\), each different from \(T\) and such that \(\{a,b\}\subseteq q_{1}\cap T\), \(\{a,c\}\subseteq q_{2}\cap T\) and \(\{b,c\}\in q_{3}\cap T\). Now, since \(|T|=3\), such inclusions must in fact be equalities, which implies \(q_{1}, q_{2}, q_{3}\) are three different cliques. Let \(c'\in q_{1}-T\), \(b'\in q_{2}-T\) and \(a'\in q_{3}-T\). Since \(T\) is a clique it follows that \(c'\) is not adjacent to \(c\), and similarly, \(b'\) is not adjacent to \(b\) and \(a'\) is not adjacent to \(a\). This implies that \(\{a',b',c'\}\) are three different vertices. Now, we have that \(\{b,c,b',c'\}\subseteq N(a)\), and given that \(G\) is a low degree graph, this inclusion is equality. It follows then that \(q_{1}=\{a,b,c'\}\), \(q_{2}=\{a,c,b'\}\) and similarly, \(q_{3}=\{b,c,a'\}\). We have now that \(\{T,q_{1},q_{2},q_{3}\}\subseteq Q_{T}\). Suppose \(q\in Q_{T}\) with \(q\ne T\). Without loss, assume that \(q\cap T=\{a,b\}\). Then \(q\subseteq N[a]\cap N[b]=\{a,b,c,c'\}\). Since \(q\ne T\) we have that \(c\not\in q\) and so \(q=\{a,b,c'\}=q_{1}\). It follows then that \(Q_{T}=\{T, q_{1}, q_{2}, q_{3}\}\). Suppose there was \(q\in K(G)\) such that \(Q_{T}\cup\{q\}\) is a complete subgraph of \(K(G)\) with \(q\not\in Q_{T}\). Then \(|q\cap T|=1\), and without loss, assume \(q\cap T=\{a\}\). Then \(q\subseteq N[a]=\{a,b,c,b',c'\}\). On the other hand, must have \(q\cap q_{3}\ne\emptyset\). Since \(|q\cap T|=1\), we have that neither of \(b,c\) could be an element of \(q\). It would follow that \(a'\in q\cap q_{3}\). But this contradicts that \(a\) is not adjacent to \(a'\). It follows then that \(Q_{T}\) is a clique of \(K(G)\), in fact, it is a necktie.
\end{proof}

\section{Forbidden subgraphs}
\label{sec:org21a9da1}

We will apply Theorem \ref{theorem-hch} to \(K^{2}(G)\), but we will first state a series of lemmas to help with the several cases that will arise. Some facts that were proven in \cite{MR2002076} in the context of graphs of local girth at least 7 are true also in the case of low degree graphs.

\begin{lemma}
\label{conditions-adjacent-neckties}
For any low degree graph \(G\):
\begin{enumerate}
\item Two stars \(x ^ {*}\) and \(y *\) are adjacent in \(K^{2}(G)\) if and only if \(x\sim y\) in \(G\).
\item A star \(x^{*}\) and a necktie \(Q_{T}\) centered at the triangle \(T\) are adjacent in \(K^{2}(G)\) if and only if \(x\) is adjacent to two vertices of \(T\).
\item Two neckties \(Q_{T}\), \(Q_{T'}\) are adjacent in \(K^{2}(G)\) if and only if:
\begin{itemize}
\item \(|T\cap T'|=2\), or
\item \(|T\cap T'|=1\) and if \(v\in T\cap T'\), there is an edge (called a \emph{crossbar}) joining a vertex in \(T-v\) to a vertex in \(T'-v\).
\end{itemize}
\end{enumerate}
\end{lemma}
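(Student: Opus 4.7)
The plan is to exploit the fundamental fact that $K^{2}(G)$ is the intersection graph of the cliques of $K(G)$: two vertices of $K^{2}(G)$ are adjacent if and only if, viewed as sets of cliques of $G$, they share at least one element. Since every vertex of $K^{2}(G)$ is either a star $x^{*}$ or a necktie $Q_{T}$, each of the three items reduces to determining when two such collections of cliques of $G$ meet. I would state this equivalence once at the outset and then handle the items in turn.

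For item (1), a clique $q$ of $G$ lies in $x^{*}\cap y^{*}$ iff $q$ contains both $x$ and $y$, which is possible iff $\{x,y\}$ is a complete subgraph of $G$ (every complete extends to some clique), iff $x\sim y$. For item (2), with $T=\{a,b,c\}$, a clique $q$ lies in $x^{*}\cap Q_{T}$ iff $x\in q$ and $|q\cap T|\geq 2$, which happens iff $x$ together with two vertices of $T$ forms a complete subgraph; since any two vertices of $T$ are already adjacent, this is equivalent to $x$ being adjacent to two vertices of $T$.

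For item (3), I would split according to the value of $|T\cap T'|$. If $|T\cap T'|=2$, then $T$ itself satisfies $|T\cap T'|\geq 2$, so $T\in Q_{T}\cap Q_{T'}$ and the neckties are adjacent. If $|T\cap T'|=0$, any common clique $q$ would contain two vertices of $T$ and two disjoint vertices of $T'$, so $|q|\geq 4$; invoking Lemma \ref{only-triangles}, the cliques in a necktie are triangles, which rules this out. The crux is the case $T\cap T'=\{v\}$: a common clique $q$ is a triangle, and the same counting argument forbids $v\notin q$, so $q=\{v,x,y\}$ with $x\in T-v$ and $y\in T'-v$; the edge $xy$ is then precisely a crossbar. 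Conversely, a crossbar $\{x,y\}$ produces the complete subgraph $\{v,x,y\}$ (edge $vx$ inside $T$, edge $vy$ inside $T'$, edge $xy$ the crossbar), which extends to a clique in $Q_{T}\cap Q_{T'}$ and so witnesses adjacency.

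The main technical step is the subcase $|T\cap T'|=1$ of item (3), where one must argue that any common clique necessarily contains $v$ and corresponds to a crossbar; this is where Lemma \ref{only-triangles} enters critically, since without the triangle restriction there would be no a priori bound on $|q|$. The other items follow directly from the intersection-graph interpretation of $K^{2}(G)$.
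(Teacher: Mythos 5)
The paper does not actually prove this lemma --- it imports it from \cite{MR2002076} without argument --- so there is no in-paper proof to compare against. Your argument is the natural one: reduce everything to the fact that $K^{2}(G)$ is the intersection graph of the cliques of $K(G)$, so adjacency of two vertices of $K^{2}(G)$ means the corresponding families of cliques of $G$ share a member, and then use that every complete subgraph of $G$ extends to a clique. Items (1) and (2) are handled correctly this way (in (2) you might note explicitly that the case $x\in T$ causes no trouble, since $x$ is then adjacent to the other two vertices of $T$). Your treatment of item (3) is also correct, and you rightly identify the necessity direction for $|T\cap T'|\leq 1$ as the only nontrivial step.

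The one point to flag is scope: your proof of the ``only if'' half of item (3) leans on Lemma \ref{only-triangles} to bound the common clique $q$ to a triangle, and that lemma requires $\Delta(G)\leq 4$, whereas the lemma is stated ``for any graph $G$.'' Without some restriction forcing the members of a necktie to be triangles, the counting argument ($|q\cap T|\geq 2$, $|q\cap T'|\geq 2$, $T\cap T'$ small, hence $|q|\geq 4$) does not yield a contradiction, and indeed one can cook up high-degree graphs where disjoint inner triangles $T,T'$ admit a common $4$-clique in $Q_{T}\cap Q_{T'}$. So what you have proved is the version of the lemma valid under the standing hypotheses of this paper (and of the cited source, where all relevant cliques are triangles), which is all that is ever used here --- but it is worth saying so explicitly rather than claiming the statement for arbitrary $G$.
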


\begin{proof}
In fact, it is immediate to prove that the first two statements are true in any simple graph. Now, for the third, if \(|T\cap T'|=2\), then \(T\in Q_{T}\cap Q_{T'}\), and so \(Q_{T}\) is adjacent to \(Q_{T'}\) in \(K^{2}(G)\) in this case. In the case that \(T\cap T'=\{v\}\) and there is an edge \(\{u,u'\}\) with \(u\in T-v\) and \(u'\in T'-v\), then the triangle \(\{v,u,u'\}\) can be extended to a clique \(q\) such that \(q\in Q_{T}\cap Q_{T'}\).

Now, suppose \(T,T'\) are internal triangles in a low degree graph such that \(Q_{T}\) is adjacent to \(Q_{T'}\) in \(K^{2}(G)\), and that \(|T\cap T'|\leq 1\). Let \(q\in Q_{T}\cap Q_{T'}\). By Lemma \ref{only-triangles}, we have that \(|q|=3\). Since \(|q\cap T|\geq 2\) and \(|q\cap T'|\geq 2\), we have that \(T\cap T'\) cannot be empty, so that \(|T\cap T'|=1\). Let \(T\cap T'=\{v\}\). If \(x\in (q\cap T)-v\) and \(x'\in (q\cap T')-v\), then \(\{x,x'\}\) is a crossbar.
\end{proof}

\begin{lemma}
\label{intersecting-inner-triangles}
Let \(G\) be a low-degree graph. If \(T\) and \(T'\) are triangles in \(G\), where \(T\cap T'=\{x\}\) and \(T\) is an inner triangle, then \(N(x)\) induces a 4-cycle. Moreover, if \(T'\) is also inner, then none of the ears of \(Q_{T}\) is an inner triangle, and none of the vertices of \(T\) besides \(x\) is a normal vertex.
\end{lemma}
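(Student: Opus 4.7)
The plan is to pin down $N(x)$ completely from the degree bound together with the inner-triangle condition on $T$, then, under the extra hypothesis that $T'$ is inner, to force a contradiction with any adjacency between the witness vertex $z$ (for $T$ inner on the edge $\{a,b\}$) and the vertices $c,d$.

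First I write $T=\{x,a,b\}$ and $T'=\{x,c,d\}$. Since $\{a,b,c,d\}\subseteq N(x)$ and $\Delta(G)\leq 4$, we have $N(x)=\{a,b,c,d\}$. Because $T$ is a clique, neither $c$ nor $d$ can be adjacent to both $a$ and $b$. Because $T$ is inner, each of the edges $\{x,a\}$ and $\{x,b\}$ sits in another triangle inside $N[x]$, so each of $a,b$ is adjacent to at least one of $c,d$. Combining these restrictions and swapping $c,d$ if necessary, one obtains $a\sim c$, $b\sim d$, $a\not\sim d$ and $b\not\sim c$. The induced edges on $N(x)$ are then exactly $\{a,b\},\{a,c\},\{b,d\},\{c,d\}$, which is the required 4-cycle.

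Now assume $T'$ is also inner. The three ears of $Q_T$ are $q_1=\{x,a,c\}$, $q_2=\{x,b,d\}$, and $q_3=\{a,b,z\}$, where $z$ is a common neighbor of $a,b$ other than $x$; similarly, $T'$ inner yields a common neighbor $w$ of $c,d$ other than $x$. A short check using the forbidden edges $b\not\sim c$ and $a\not\sim d$ shows $z,w\notin\{x,a,b,c,d\}$. The crucial claim is that $z\not\sim c$ and (symmetrically) $z\not\sim d$. Indeed, if $z\sim c$ and $z\ne w$, then $c$ would have at least the five distinct neighbors $x,a,d,w,z$, violating $\Delta(G)\leq 4$. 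If instead $z=w$, then $z\sim a,b,c,d$, so the six vertices $\{x,a,b,c,d,z\}$ together with the edges already established induce an octahedron in which every vertex already has degree $4$; the degree bound then forces this octahedron to be a connected component, and by connectedness of $G$ we conclude that $G$ is the octahedron, against the low-degree hypothesis. This step, with its appeal to the octahedron exclusion built into the definition of a low-degree graph, is the main obstacle of the argument.

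Once $z\not\sim c,d$ is established, the remaining claims are routine bookkeeping. For $q_1$ to be inner the edge $\{a,c\}$ would need a triangle apex in $(N(a)\cap N(c))\setminus\{x\}\subseteq\{b,z\}$, but neither $b$ nor $z$ is adjacent to $c$, so $q_1$ is not inner; the same argument handles $q_2$ (via $\{b,d\}$) and $q_3$ (via $\{a,z\}$, whose only possible apex in $N(a)\setminus\{b\}=\{x,c\}$ fails since $x\not\sim z$ and $z\not\sim c$). Finally, $N(a)=\{x,b,c,z\}$ forces $a^*=\{T,q_1,q_3\}$, while the clique $q_2$ does not contain $a$ yet meets each of $T,q_1,q_3$ in $b,x,b$ respectively; thus $a^*\cup\{q_2\}$ is a strictly larger complete in $K(G)$, so $a^*$ is not a clique of $K(G)$, i.e.\ $a$ is not a normal vertex. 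The argument for $b$ is symmetric.
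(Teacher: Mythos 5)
Your proof is correct and follows essentially the same route as the paper's: derive the $4$-cycle on $N(x)$ from the degree bound, maximality of $T$, and the inner-triangle condition; rule out $z=w$ via the octahedron exclusion; use the saturated neighborhoods $N(a),N(b),N(c),N(d)$ to kill the adjacencies $z\sim c$, $z\sim d$; and conclude that no ear is inner and that $a^{*},b^{*}$ are properly contained in $Q_{T}$. The only differences are cosmetic (notation, and isolating $z\not\sim c,d$ as an explicit intermediate claim rather than checking each ear's edge directly).
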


\begin{proof}
Suppose that \(T = \{x,e,f\}\) and \(T'  =  \{x,g,h\}\). Since the triangle \(T\) is inner, considering the edge \(\{x,e\}\) there must be \(y\in G\) and a triangle \(T_{1}     =\{x,e,y \}\ne T\). We have that \(y\in N(x)    =\{e,f,g,h\}\). Since \(y\ne e\) because \(|T_{1}| = 3\) and \(y\ne f\) because \(T_{1}\ne  T\), we must have that \(y = g\) or \(y = h\). Without loss, assume \(y = g\), hence \(e\sim g\). Now, considering analogously the edge \(\{x,f\}\), and given that \(T\) is a clique of \(G\), we must have \(f\sim h\). Since \(T\) is a clique, we have that \(e\) is not adjacent to \(h\), and also that \(f\) is not adjacent to \(g\). Hence \(N(x)\) induces a \(4\)-cycle. Given that the degree of \(x\) is \(4\), we must have that \(T'\) is a clique. Since \(T\) is inner, then there is a vertex \(u\) such that the triangle \(\{u,e,f\}\) is different from \(T\). Besides, \(u\not\in\{g,h\}\) because \(T\) is a clique.

In the case that \(T'\) is inner, then analogously there is a vertex \(u'\) such that \(\{u',g,h\}\) is different from \(T'\). We have that \(u\ne u'\), otherwise, \(G\) would be \(4\)-regular and an octahedron. Let us consider the triangle \(T_{1}=\{e,g,x\}\), which is an ear in \(Q_{T}\). If \(T_{1}\) were inner, there would be a vertex \(y\) such that \(\{e,g,y\}\) is a triangle different from \(T_{1}\). But \(y\in N(e)\cap   N(g)=\{x\}\), which is not possible. Hence \(T_{1}\) is not inner, and by symmetry, \(\{f,h,x\}\) is not inner either. Now, consider \(T_{2}=\{e,f,u\}\), and suppose there is a vertex \(z\) such that \(\{e,u,z\}\) is a triangle different from \(T_{2}\). Then \(z\in N(e)=\{u,f,x,g\}\). None of these is possible by our assumptions. Hence \(T_{2}\) is not inner. Finally, neither \(e^{*}\) nor \(f^{ * }\) are cliques, since they are properly contained in \(Q_{T}\).
\end{proof}

As a corollary to the two previous lemmas, we have that if \(T_{1}\) and \(T_{2}\) are internal triangles where \(T_{1}\cap T_{2}\ne\emptyset\), then \(Q_{T_{1}}\sim Q_{T_{2}}\) in \(K^{2}(G)\). Because if \(|T_{1}\cap T_{2}|=1\), then there are (two) crossbars from \(T_{1}\) to \(T_{2}\). The situation in this case is pictured in Figure \ref{intersection-in-one}.
\begin{figure}
\centering
\begin{tikzpicture}
    [vertex/.style={circle, fill, draw, inner sep=0pt, minimum size=4pt},
    edge/.style={thin},
    edashed/.style={dashed, thin}]
    \newcommand{\nodecab}[3]{\node at (#1,#2) [vertex] (#3) {};\node at (#3) [above] {\(#3\)};}
    \newcommand{\nodepab}[3]{\node at (#1:#2) [vertex] (#3) {};\node at (#3) [above] {\(#3\)};}
    \newcommand{\nodepbe}[3]{\node at (#1:#2) [vertex] (#3) {};\node at (#3) [below] {\(#3\)};}
    \nodecab{0}{0}{x}
    \nodecab{{sqrt(3)}}{0}{u'}
    \nodecab{{-sqrt(3)}}{0}{u}
    \nodepab{45}{1}{g}
    \nodepab{135}{1}{e}
    \nodepbe{-135}{1}{f}
    \nodepbe{-45}{1}{h}
    \node at ({sqrt(2)/3},0) {\(T_{2}\)};
    \node at ({-sqrt(2)/3},0) {\(T_{1}\)};
    \draw[edge] (x) -- (e) -- (u) -- (f) -- (x) -- (g) -- (u') -- (h) -- (x);
    \draw[edge] (e) -- (g) -- (h) -- (f) -- (e);
\end{tikzpicture}
\caption{Case \(|T_{1}\cap T_{2}|=1\) \label{intersection-in-one}}
\end{figure}

\begin{lemma}
\label{intersecting-inner-in-two}
Let \(G\) be a low-degree graph, and let \(T,T'\) be internal triangles such that \(T\cap T'=\{x,y\}\). Then \(N(x)\) induces a \(4\)-cycle, the only ear of \(Q_{T}\) that is an inner triangle is \(T'\), and the only normal vertices of \(T\) are \(x,y\).
\end{lemma}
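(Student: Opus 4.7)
The plan is to write $T=\{x,y,a\}$ and $T'=\{x,y,b\}$, and exploit the innerness of both triangles together with the bound $\Delta(G)\le 4$ to pin down the entire local structure. The crucial initial observation is that $a\not\sim b$, since otherwise $\{x,y,a,b\}$ would be a complete properly containing the clique $T$; this non-adjacency drives almost everything that follows.

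First I would apply the innerness of $T$ to the edges $\{x,a\}$ and $\{y,a\}$, producing vertices $c$ and $d$ with triangles $\{x,a,c\}\ne T$ and $\{y,a,d\}\ne T$. Since $a\not\sim b$, we have $c\ne b$ and $d\ne b$, so the degree bound forces $N(x)=\{y,a,b,c\}$ and $N(y)=\{x,a,b,d\}$. Applying the same argument to the innerness of $T'$ at the edges $\{x,b\}$ and $\{y,b\}$ produces analogous witnesses $c',d'$; the degree bound then identifies $c'=c$ and $d'=d$, so $b\sim c$ and $b\sim d$. Filling in the (now saturated) degrees at $a$ and $b$ gives $N(a)=N(b)=\{x,y,c,d\}$, and in particular $c\ne d$ (else $\{x,y,a,c\}$ would already be a complete of size $4$ containing $T$).

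At this point the only undetermined adjacency within $\{x,y,a,b,c,d\}$ is whether $c\sim d$. If $c\sim d$, then the degrees of $c$ and $d$ are filled with all neighbors in this set, so the six vertices form a connected component of $G$ whose induced subgraph is easily identified as the octahedron. As $G$ is connected, this would contradict the low-degree hypothesis. Hence $c\not\sim d$, and the claim $N(x)$ is a $4$-cycle is then immediate: the neighbors $y,a,b,c$ are linked in the cyclic order $y,a,c,b$, with non-edges $y\not\sim c$ and $a\not\sim b$.

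For the last two claims I would enumerate the ears of $Q_T$, which are $T'$, $\{x,a,c\}$ and $\{y,a,d\}$. The triangle $\{x,a,c\}$ is not inner because the only candidates in $N(a)$ besides $x$, namely $y$ and $d$, are non-adjacent to $c$, so the edge $\{a,c\}$ has $x$ as its unique common neighbor; by symmetry neither is $\{y,a,d\}$ inner. A similar enumeration shows that the cliques of $G$ containing $a$ are exactly $T$, $\{x,a,c\}$ and $\{y,a,d\}$, so $a^{*}\subsetneq Q_T$ fails to be maximal in $K(G)$ and $a$ is not normal. To see that $x$ is normal, I would list the cliques containing $x$, namely $T,T',\{x,a,c\},\{x,b,c\}$, and argue by a short case split on whether $y$ lies in a hypothetical extending clique $q$ that any such $q$ must contain the non-edge $\{a,b\}$; hence $x^{*}$ is a clique of $K(G)$, and $y$ is handled symmetrically. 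The main obstacle is the bookkeeping that forces $c'=c$ and $d'=d$ from the two innerness hypotheses and the degree bound; once that is in place, the octahedron exclusion and all the enumerations reduce to short degree-counts.
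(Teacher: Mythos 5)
Your proof is correct, and it arrives at exactly the local configuration of Figure \ref{intersection-in-two}, but by a more self-contained route than the paper. The paper gets the $4$-cycle structure of $N(x)$ in one step: it takes the triangle $T_{1}=\{x,u,x'\}$ supplied by the innerness of $T$ at the edge $\{x,u\}$, notes that $T'\cap T_{1}=\{x\}$, and invokes Lemma \ref{intersecting-inner-triangles} for the pair $(T',T_{1})$; you instead rebuild the whole neighbourhood from scratch by degree counting, identifying the witnesses $c'=c$ and $d'=d$ coming from the innerness of $T'$ and excluding $c\sim d$ via the octahedron. The octahedron hypothesis enters at the same logical point in both arguments (your adjacency $c\sim d$ is the paper's possibility $z=y'$), so nothing essential differs there. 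What your version buys is, first, independence from Lemma \ref{intersecting-inner-triangles}, and second, completeness: the paper's proof verifies only that $u$ (your $a$) is not normal and that the ear $T_{1}$ is not inner (the other ear following by the $x\leftrightarrow y$ symmetry), whereas the statement also asserts that $x$ and $y$ \emph{are} normal. Your enumeration of the four cliques through $x$, together with the observation that any further clique of $G$ meeting all of them would have to contain the non-edge $\{a,b\}$, supplies exactly that missing verification.
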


\begin{proof}
Suppose that \(T=\{x,y,u\}\), and \(T' = \{x,y,u'\}\). Considering the edge \(\{x,u\}\) of the inner triangle \(T\), let \(x'\in G\) such that \(T_{1} = \{x,u,x'\}\ne T\). Since \(T\) is a clique, we must have \(x'\ne u'\). Applying the Lemma \ref{intersecting-inner-triangles} to the inner triangle \(T'\) and \(T_{1}\), we obtain that \(N(x)\) is a \(4\)-cycle, hence \(x'\sim u'\). Applying the previous steps to the edge \(\{y,u\}\) of the triangle \(T\), we obtain a vertex \(y'\in G\) that is a neighbor of \(u,y,u'\) so that \(N(y)\) is a \(4\)-cycle. Since \(T\) is a clique, we have \(y'\ne x'\).
Now, the triangle \(T_{1}\) is an ear of \(Q_{T}\). If \(T_{1}\) were inner, there would be a vertex \(z\) such that \(\{z,u,x'\}\) is a triangle different from \(T_{1}\). The vertex \(z\) is different from \(y'\) since \(G\) is not an octahedron. Moreover \(z\in N(u)=\{x,x',y,y'\}\), but this contradicts our other assumptions. Hence \(T_{1}\) is not inner. Now, \(u\) is not a normal vertex since \(u^{*}\) is contained properly in \(Q_{T}\). 
\end{proof}

The situation of the previous lemma is shown in Figure \ref{intersection-in-two}.
\begin{figure}
\centering
\begin{tikzpicture}
  [vertex/.style={circle, fill, draw, inner sep=0pt, minimum size=4pt},
  edge/.style={thin},
  edashed/.style={dashed, thin}]
  \newcommand{\nodecab}[3]{\node at (#1,#2) [vertex] (#3) {};\node at (#3) [above] {\(#3\)};}
  \newcommand{\nodepab}[3]{\node at (#1:#2) [vertex] (#3) {};\node at (#3) [above] {\(#3\)};}
  \newcommand{\nodepbe}[3]{\node at (#1:#2) [vertex] (#3) {};\node at (#3) [below] {\(#3\)};}
  \nodecab{0}{0}{x}
  \nodepab{0}{1}{y}
  \nodepab{0}{2}{y'}
  \nodepab{180}{1}{x'}
  \nodepab{60}{1}{u}
  \nodepbe{-60}{1}{u'}
  \node at (30:{sqrt(3)/3}) {\(T_{1}\)};
  \node at (-30:{sqrt(3)/3}) {\(T_{2}\)};
  \draw[edge] (u) -- (x') -- (u') -- (y') -- (u) -- (x) -- (u') -- (y) -- (u);
  \draw[edge] (x') -- (x) -- (y) -- (y');
\end{tikzpicture}
\caption{Case \(|T_{1}\cap T_{2}|=2\) \label{intersection-in-two}}
\end{figure}

\begin{lemma}
\label{no-path}
If \(G\) is a low-degree graph, there is no path (induced or not) in \(K^{2}(G)\), that consists of three neckties.
\end{lemma}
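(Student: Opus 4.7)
The plan is a case analysis on the two types of necktie--necktie adjacency given in Lemma~\ref{conditions-adjacent-neckties}. Suppose for contradiction that $Q_{T_{1}}\sim Q_{T_{2}}\sim Q_{T_{3}}$ is a path of three distinct neckties; then $T_{1},T_{2},T_{3}$ are distinct inner triangles, and each consecutive adjacency is either of \emph{2-intersection} type ($|T_{i}\cap T_{i+1}|=2$) or \emph{1-intersection-with-crossbar} type.

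First I would dispose of the case in which at least one of the adjacencies is of 2-intersection type; reversing the path if needed, assume $|T_{1}\cap T_{2}|=2$. Then $T_{1}$ is an inner ear of $Q_{T_{2}}$, and by Lemma~\ref{intersecting-inner-in-two} it is the only inner ear of $Q_{T_{2}}$. If also $|T_{2}\cap T_{3}|=2$, then $T_{3}$ is a second inner ear, forcing $T_{3}=T_{1}$, a contradiction. Otherwise $|T_{2}\cap T_{3}|=1$ with a crossbar, and Lemma~\ref{intersecting-inner-triangles} applied to the inner pair $(T_{2},T_{3})$ at their common vertex yields that no ear of $Q_{T_{2}}$ is inner at all, again contradicting that $T_{1}$ is an inner ear.

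We may therefore assume $T_{1}\cap T_{2}=\{x\}$ and $T_{2}\cap T_{3}=\{y\}$, both with crossbars. If $x=y$, writing $T_{2}=\{x,g,h\}$ and $T_{1}=\{x,e,f\}$, Lemma~\ref{intersecting-inner-triangles} gives $N(x)=\{e,f,g,h\}$ as a 4-cycle; since $T_{3}\setminus\{x\}\subseteq N(x)\setminus\{g,h\}=\{e,f\}$, we obtain $T_{3}=T_{1}$, a contradiction. So $x\ne y$. Write $T_{2}=\{x,y,z\}$, $T_{1}=\{x,a,b\}$, $T_{3}=\{y,c,d\}$. Lemma~\ref{intersecting-inner-triangles} at $x$ and at $y$ forces $N(x)=\{y,z,a,b\}$ and $N(y)=\{x,z,c,d\}$, each a 4-cycle containing the known edges $yz,ab$, respectively $xz,cd$. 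After relabeling within $\{a,b\}$ the 4-cycle on $N(x)$ may be taken to be $y-a-b-z-y$, so in particular $y\not\sim b$. A parallel analysis on $N(y)$ forces the crossbar endpoint $c$ to lie in $N(x)\cap\{a,b\}$; using $y\sim c$ and $y\not\sim b$, we conclude $c=a$, so that $T_{1}\cap T_{3}=\{a\}$.

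The key step is a third application of Lemma~\ref{intersecting-inner-triangles}, now at $a$ to the inner pair $(T_{1},T_{3})$: $N(a)=\{x,y,b,d\}$ must induce a 4-cycle. The three known edges $xy,xb,yd$ leave a unique way to complete the 4-cycle, namely by adding $bd$. Tracking degrees then forces $N(b)=\{x,a,z,d\}$, $N(d)=\{y,a,z,b\}$ and $N(z)=\{x,y,b,d\}$, and one checks directly that $\{x,y,z,a,b,d\}$ induces the octahedron with antipodal pairs $\{x,d\}$, $\{y,b\}$, $\{z,a\}$. Since every vertex in this set has all four of its $G$-neighbors already in the set, connectedness of $G$ forces $G$ to equal this octahedron, contradicting the standing hypothesis. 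The main obstacle is the bookkeeping required to keep the orientations of the three forced 4-cycles on $N(x)$, $N(y)$ and $N(a)$ mutually consistent; once this is pinned down, the octahedron materializes automatically and yields the contradiction.
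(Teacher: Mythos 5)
Your proof is correct and follows essentially the same route as the paper: both arguments exploit the rigid local structure around an adjacent pair of neckties given by Lemmas \ref{intersecting-inner-triangles} and \ref{intersecting-inner-in-two}, and then check that a third necktie cannot attach without recreating the octahedron. The only difference is one of presentation: the paper compresses the verification into an appeal to Figures \ref{intersection-in-one} and \ref{intersection-in-two}, while you carry out the case analysis (including the explicit reconstruction of the octahedron in the double one-point-intersection case) in full.
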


\begin{proof}
Let \(T_{1}\), \(T_{2}\) and \(T_{3}\) be three inner triangles in \(G\) such that \(Q_{T_{1}}\sim Q_{T_{2}}\sim Q_{T_{3}}\) in \(K^{2}(G)\). From \(Q_{T_{1}}\sim Q_{T_{2}}\), we have one of the two situations shown in either Figure \ref{intersection-in-one} or Figure \ref{intersection-in-two}. But in none of those cases it is possible to have an inner triangle \(T_{3}\), different from \(T_{1}\), and satisfying  \(T_{3}\cap T_{2}\ne\emptyset\).
\end{proof}

\begin{lemma}
Let \(G\) be a low-degree graph, where \(a,b\in G\) are normal vertices and \(T_{1},T_{2}\) are inner triangles. Then \(a^{*}, b^{ * }, Q_{T_{1}}, Q_{T_{2}}\) do not induce a diamond in \(K^{2}(G)\) of one of the forms shown in Figure \ref{forbidden-diamond}.
\begin{figure}
\centering
\begin{tikzpicture}
    [vertex/.style={circle, fill, draw, inner sep=0pt, minimum size=4pt},
    edge/.style={thin},
    edashed/.style={dashed, thin}]
    \newcommand{\nodecab}[3]{\node at (#1,#2) [vertex] (#3) {};\node at (#3) [above] {\(#3\)};}
    \newcommand{\nodecbe}[3]{\node at (#1,#2) [vertex] (#3) {};\node at (#3) [below] {\(#3\)};}
    \newcommand{\nodepab}[3]{\node at (#1:#2) [vertex] (#3) {};\node at (#3) [above] {\(#3\)};}
    \newcommand{\nodepbe}[3]{\node at (#1:#2) [vertex] (#3) {};\node at (#3) [below] {\(#3\)};}
    \nodecab{1}{0}{Q_{T_2}}
    \nodecab{0}{1/2}{Q_{T_1}}
    \nodecab{-1}{0}{a^*}
    \nodecbe{0}{-1/2}{b^*}
    \draw[edge] (a^*) -- (Q_{T_1}) -- (Q_{T_2}) -- (b^*) -- (a^*);
    \draw[edge] (Q_{T_1}) -- (b^*);
    \begin{scope}[xshift=5cm]
      \nodecab{1}{0}{Q_{T_2}}
      \nodecab{0}{1/2}{a^*}
      \nodecab{-1}{0}{Q_{T_1}}
      \nodecbe{0}{-1/2}{b^*}
      \draw[edge] (a^*) -- (Q_{T_1}) -- (b^*) -- (Q_{T_2}) -- (a^*);
      \draw[edge] (a^*) -- (b^*);      
    \end{scope}
\end{tikzpicture}
\caption{Forbidden diamonds \label{forbidden-diamond}}
\end{figure}
\end{lemma}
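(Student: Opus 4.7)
The plan is to assume for contradiction that $a^{*}, b^{*}, Q_{T_1}, Q_{T_2}$ induce one of the diamonds in Figure \ref{forbidden-diamond}, and in each case to derive a contradiction from the structural lemmas of this section. Translating edges via Lemma \ref{conditions-adjacent-neckties}, $a^{*}\sim b^{*}$ means $a\sim b$ in $G$; $a^{*}\sim Q_{T}$ means ``$a$ is adjacent to two vertices of $T$''; and $Q_{T_1}\sim Q_{T_2}$ means ``$|T_1\cap T_2|=2$, or $|T_1\cap T_2|=1$ with a crossbar''.

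For the first diamond, the missing edge is $a^{*}Q_{T_2}$, so $a$ is adjacent to at most one vertex of $T_2$, while $Q_{T_1}\sim Q_{T_2}$ places us in the configuration of Figure \ref{intersection-in-one} or Figure \ref{intersection-in-two}. Lemmas \ref{intersecting-inner-triangles} and \ref{intersecting-inner-in-two} mark the non-shared vertices of $T_1\cup T_2$ as non-normal and saturate the degrees of the shared vertex (or vertices). Using $b^{*}\sim Q_{T_1}$, $b^{*}\sim Q_{T_2}$, and normality of $b$, I would restrict $b$ to a very short list: either one of the one or two vertices of $T_1\cap T_2$, or an outside ``twin'' vertex like the $u$ of Figure \ref{intersection-in-one} with $N(u)=\{e,f,g,h\}$. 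A parallel enumeration for $a$, using $a\sim b$ and normality, yields only candidates already adjacent to two vertices of $T_2$, contradicting the missing edge.

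For the second diamond, the missing edge is $Q_{T_1}Q_{T_2}$, so Lemma \ref{conditions-adjacent-neckties} gives $|T_1\cap T_2|\in\{0,1\}$ with no crossbar in the latter case. If $T_1\cap T_2=\{v\}$, then both triangles being inner forces (via the argument in the proof of Lemma \ref{intersecting-inner-triangles}, applied to each edge of $T_1$ at $v$) the existence of edges between $T_1-v$ and $T_2-v$, which are crossbars; this contradicts the case assumption, so $T_1\cap T_2=\emptyset$. In the disjoint case, $|N(a)\cap T_1|=|N(a)\cap T_2|=2$ together with $a\sim b$ and $\Delta(G)\leq 4$ force $N(a)$ to consist exactly of two vertices of $T_1$ and two of $T_2$, so $b\in N(a)\subseteq T_1\cup T_2$, and symmetrically $a\in T_1\cup T_2$. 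Up to relabeling there are two placements: $a,b$ in the same triangle, or one in each. For each placement, using that $T_1$ and $T_2$ are inner cliques (the second-triangle-per-edge condition together with the exclusion of $4$-extensions) pins down every remaining edge of the induced subgraph on $T_1\cup T_2$, and that subgraph turns out in both cases to be $4$-regular with exactly three non-edges pairing opposite vertices: the octahedron. Since every vertex of the subgraph is already saturated at degree $4$, the connected graph $G$ equals this subgraph, contradicting the hypothesis that $G$ is not the octahedron.

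The main obstacle is the disjoint subcase of the second diamond: each of the two placements of $a,b$ in $T_1\cup T_2$ requires carefully working through the three ``second triangles'' forced on the edges of $T_1$ and of $T_2$ by their innerness, and at every step excluding the $4$-extensions that would violate clique-maximality. Once these forced edges are in place, the octahedron reads off directly, and the degree saturation converts this local picture into a global identification $G=$ octahedron.
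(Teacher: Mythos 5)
Your treatment of the left-hand diamond follows the paper's own argument: use normality of \(b\) together with Lemmas \ref{intersecting-inner-triangles} and \ref{intersecting-inner-in-two} to pin \(b\) down to \(T_1\cap T_2\) (plus the possible outside vertex adjacent to two vertices of each triangle, which you rightly keep on the candidate list --- in the \(|T_1\cap T_2|=2\) picture this is the vertex \(x'\), and it does need to be ruled in or out), and then observe that every normal neighbour of such a \(b\) whose star sees \(Q_{T_1}\) already has its star adjacent to \(Q_{T_2}\). For the right-hand diamond you genuinely diverge. After reducing to \(T_1\cap T_2=\emptyset\) and \(a,b\in T_1\cup T_2\) exactly as the paper does, the paper asserts without further comment that \(a\) and \(b\) lie in different triangles and then closes the case by constructing the inner triangles \(T=\{a,b,w\}\) and \(T'=\{a,b,u\}\) and invoking Lemma \ref{intersecting-inner-in-two} to contradict the fact that \(T_1\) is an inner ear of \(Q_{T'}\). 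You instead treat both placements of \(\{a,b\}\) (same triangle or one in each) and, in each, let the innerness of \(T_1\) and \(T_2\), Lemma \ref{intersecting-inner-triangles}, and degree saturation force every remaining edge of the subgraph induced on \(T_1\cup T_2\), which comes out as the octahedron; since all six vertices are then saturated and \(G\) is connected, \(G\) itself is the octahedron, a contradiction. Both routes work (I checked that the forced edges do produce \(K_6\) minus a perfect matching in each placement); yours costs more case-checking but is self-contained for this half --- it does not need Lemma \ref{intersecting-inner-in-two} --- and it explicitly covers the same-triangle placement of \(a,b\) that the paper dispatches only with ``by a similar argument.''
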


\begin{proof}
Consider the case to the left in Figure \ref{forbidden-diamond}. Since \(Q_{T_{1}}\sim Q_{T_{2}}\), we have \(|T_{1}\cap T_{2}|\ne \emptyset\). If \(|T_{1}\cap T_{2}|=1\), we have the situation of Figure \ref{intersection-in-one}. Since \(b^{*}\) is a neighbor of both \(Q_{T_{1}}\) and \(Q_{T_{2}}\), and \(b\) is a normal vertex, we have that \(b\in T_{1}\cap T_{2}\). It is immediate to verify that no neighbor of \(b\) is a normal vertex. Now, if \(|T_{1}\cap T_{2}|=2\), then again our assumptions on \(b\) imply that \(b\in T_{1}\cap T_{2}\), and all neighbors \(x\) of \(b\) that are normal vertices are such that \(x^{ * }\) is a neighbor of both \(Q_{T_{1}}\) and \(Q_{T_{2}}\).

Now, with respect to the case on the right, since \(Q_{T_{1}}\) is not adjacent to \(Q_{T_{2}}\) in \(K^{2}(G)\), then \(T_{1}\cap T_{2}=\emptyset\). Since \(b\) must be a neighbor to two vertices in \(T_{1}\), to two vertices in \(T_{2}\) and to the vertex \(a\), and the degree of \(b\) is at most four, we have that \(a\in T_{1}\cup T_{2}\). Without loss, assume \(a\in T_{1}\). By a similar argument, we must have \(b\in T_{2}\). Let \(T_{1}=\{a,u,v\}\) and \(T_{2}=\{b,w,r\}\). Since \(a\) is a neighbor to two vertices in \(T_{2}\), without loss we may assume that \(a\sim w\). Applying Lemma \ref{intersecting-inner-triangles} to the inner triangle \(T_{1}\) and the triangle \(T=\{a,b,w\}\), we have that \(N(a)\) is a \(4\)-cycle, and without loss, we may assume \(u\sim b\) and \(v\sim w\). Similarly, applying the same lemma to \(T_{2}\) and the triangle \(T' = \{a,b,u\}\) we have that \(u\sim r\). But then \(T\)  and \(T'\) are inner triangles, and Lemma \ref{intersecting-inner-in-two} applies. We have that \(T\) is the only ear in \(Q_{T'}\) that is an inner triangle, which contradicts that \(T_{1}\) is inner.
\end{proof}

\begin{lemma}
\label{lemma-III}
Let \(G\) be a low-degree graph, and let \(a,b,x,z\in G\) be normal vertices and \(T\) an inner triangle in \(G\) forming the subgraph of \(K^{2}(G)\) with the solid edges in Figure \ref{case-III}. Then one of the dashed edges has to be present.

\begin{figure}
\centering
\begin{tikzpicture}
  [vertex/.style={circle, fill, draw, inner sep=0pt, minimum size=4pt},
  edge/.style={thin},
  edashed/.style={dashed, thin}]
  \newcommand{\nodecab}[3]{\node at (#1,#2) [vertex] (#3) {};\node at (#3) [above] {\(#3\)};}
  \newcommand{\nodecbe}[3]{\node at (#1,#2) [vertex] (#3) {};\node at (#3) [below] {\(#3\)};}
  \newcommand{\nodepab}[3]{\node at (#1:#2) [vertex] (#3) {};\node at (#3) [above] {\(#3\)};}
  \newcommand{\nodepbe}[3]{\node at (#1:#2) [vertex] (#3) {};\node at (#3) [below] {\(#3\)};}
  \nodecbe{0}{0}{b^*}
  \nodepbe{0}{1}{x^*}
  \nodepab{60}{1}{Q_T}
  \nodepab{120}{1}{a^*}
  \nodepbe{180}{1}{z^*}
  \draw[edge] (b^*) -- (z^*) -- (a^*) -- (Q_T) -- (x^*) -- (b^*);
  \draw[edge] (a^*) -- (b^*) -- (Q_T);
  \draw[dashed] (z^*) -- (Q_T);
  \draw[dashed] (a^*) -- (x^*);
\end{tikzpicture}
\caption{Forbidden subgraph \label{case-III}}
\end{figure}
\end{lemma}

\begin{proof}
Suppose there is a subgraph with the solid edges of the form in Figure \ref{case-III}, but without the dashed edges. Then in \(G\) we have that \(\{a,x.z\}\subseteq N(b)\). Since \(b^{ * }\sim Q_{T}\) in \(K^{2}(G)\), we have that two neighbors of \(b\) are in \(T\), and so, at least one of \(\{a,x,z\}\) is an element of \(T\). But since \(z\not\sim Q_{T}\), we have that \(z\not\in T\), and it is not possible for \(a,b\) to be both elements of \(T\).

Suppose first that \(a\in T\) (and so, \(b\not\in T\)). Let us write \(T=\{a,u,v\}\).  Now, \(b\) is a neighbor of two elements of \(T\), let us say without loss that \(b\sim v\). Applying Lemma \ref{intersecting-inner-triangles}, we must then have that \(z\sim u\). But this contradicts that \(z\not\sim Q_{T}\). Consider then the case that \(a\not\in T\) and \(x\in T\). Suppose \(T=\{x,u,v\}\). If we had that \(b\in\{u,v\}\), we could assume without loss that \(b=v\). Since \(a\) is a neighbor of two elements of \(T\), and \(a\not\sim x\) by our hypothesis, then \(a\sim u\). Applying Lemma \ref{intersecting-inner-triangles} to the inner triangle \(T\) and the triangle \(\{b,a,z\}\), we obtain that \(z\sim x\), which again would mean that \(z\) is a neighbor of two elements of \(T\). This shows that \(b\not\in\{u,v\}\). Then \(a^{ * }\sim Q_{T}\) in \(K^{2}(G)\) implies that \(a\sim u\) and \(a\sim v\) in \(G\). We also have \(b^{ * }\sim Q_{T}\) in \(K^{2}(G)\), without loss, assume \(b\sim v\). Then the triangle \(\{a,b,v\}\) is inner, and we are in the situation of Figure \ref{intersecting-inner-triangles}, which shows that \(x\) is not a normal vertex, contradicting our hypothesis. 
\end{proof}

\begin{lemma}
\label{lemma-IV}
Let \(G\) be a low-degree graph, and let \(a,b,x,z\in G\) be normal vertices and \(T\) an inner triangle in \(G\) forming the subgraph of \(K^{2}(G)\) with the solid lines in Figure \ref{case-IV}. Then one of the dashed lines has to be present.

\begin{figure}
\centering
\begin{tikzpicture}
  [vertex/.style={circle, fill, draw, inner sep=0pt, minimum size=4pt},
  edge/.style={thin},
  edashed/.style={dashed, thin}]
  \newcommand{\nodecab}[3]{\node at (#1,#2) [vertex] (#3) {};\node at (#3) [above] {\(#3\)};}
  \newcommand{\nodecbe}[3]{\node at (#1,#2) [vertex] (#3) {};\node at (#3) [below] {\(#3\)};}
  \newcommand{\nodepab}[3]{\node at (#1:#2) [vertex] (#3) {};\node at (#3) [above] {\(#3\)};}
  \newcommand{\nodepbe}[3]{\node at (#1:#2) [vertex] (#3) {};\node at (#3) [below] {\(#3\)};}
  \nodecbe{0}{0}{b^*}
  \nodepbe{0}{1}{x^*}
  \nodepab{60}{1}{c^*}
  \nodepab{120}{1}{a^*}
  \nodepbe{180}{1}{Q_T}
  \draw[edge] (b^*) -- (Q_T) -- (a^*) -- (c^*) -- (x^*) -- (b^*);
  \draw[edge] (a^*) -- (b^*) -- (c^*);
  \draw[dashed] (c^*) -- (Q_T);
  \draw[dashed] (a^*) -- (x^*);
\end{tikzpicture}
\caption{Forbidden subgraph \label{case-IV}}
\end{figure}
\end{lemma}

\begin{proof}
Suppose there is a subgraph with the solid edges of the form in Figure \ref{case-IV}, but without the dashed edges. Then in \(G\) we have that the normal vertices \(\{a,c,b,x\}\) induce a diamond. Since \(b^{ * }\sim Q_{T}\) and \(\{a,c,x\}\subseteq N(b)\), at least one of \(\{a,c,x\}\) is in the inner triangle \(T\). But since \(c^{ * }\not\sim Q_{T}\), we have that \(c\not\in T\), and at most one of \(a,b\) is in \(T\). Consider first the case where \(a\in T\), and let \(T=\{a,u,v\}\). Since \(c^{ * }\not\sim Q_{T}\) we must have \(\{u,v\}\cap \{b,c,x\}=\emptyset\). Applying Lemma \ref{intersecting-inner-triangles}, to the inner triangle \(T\) and the triangle \(\{a,b,c\}\), we have that \(c\) would have to be adjacent to either of \(u\) or \(v\), but this contradicts \(c^{ * }\not\sim Q_{T}\). So, let us consider the case \(a\not \in T\) and \(x\in T\). Then \(T=\{x,u,v\}\) with \(\{u,v\}\cap \{a,b,c\}=\emptyset\). As before, an application of Lemma \ref{intersecting-inner-triangles} leads to \(c\sim u\), or to \(c\sim v\) but this is a contradiction.
\end{proof}

\section{Proof of the Main Theorem}
\label{sec:orgcfbcd0b}

\begin{theorem}
Let \(G\) be a low-degree graph. If the graph with the solid edges in Figure \ref{all-complete} appears as a subgraph of \(K^{2}(G)\), then there is necessarily at least one of the dotted edges.

\begin{figure}
\centering
\begin{tikzpicture}
  [vertex/.style={circle, fill, draw, inner sep=0pt, minimum size=4pt},
  edge/.style={thin},
  edashed/.style={dashed, thin}]
  \newcommand{\nodecab}[3]{\node at (#1,#2) [vertex] (#3) {};\node at (#3) [above] {\(#3\)};}
  \newcommand{\nodecbe}[3]{\node at (#1,#2) [vertex] (#3) {};\node at (#3) [below] {\(#3\)};}
  \newcommand{\nodepab}[3]{\node at (#1:#2) [vertex] (#3) {};\node at (#3) [above] {\(#3\)};}
  \newcommand{\nodepbe}[3]{\node at (#1:#2) [vertex] (#3) {};\node at (#3) [below] {\(#3\)};}
  \newcommand{\nodepang}[4]{\node[label=#4:\(#3\)] at (#1:#2) [vertex] (#3) {};}
  \nodecbe{0}{0}{Q_1}
  \nodepbe{0}{1}{Q_4}
  \nodepang{60}{1}{Q_2}{0}
  \nodepang{120}{1}{Q_3}{180}
  \nodepbe{180}{1}{Q_6}
  \nodepab{90}{{sqrt(3)}}{Q_5}
  \draw[edge] (Q_1) -- (Q_4) -- (Q_2) -- (Q_5) -- (Q_3) -- (Q_6) -- (Q_1);
  \draw[edge] (Q_1) -- (Q_2) -- (Q_3) -- (Q_1);
  \draw[dashed] (Q_1) -- (Q_5);
  \draw[dashed] (Q_2) -- (Q_6);
  \draw[dashed] (Q_3) -- (Q_4);
\end{tikzpicture}
\caption{Forbidden subgraph \label{all-complete}}
\end{figure}
\end{theorem}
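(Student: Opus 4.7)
The plan is to argue by contradiction: assume the nine solid edges of Figure \ref{all-complete} appear in $K^{2}(G)$ but none of the three dashed edges $Q_{1}Q_{5}$, $Q_{2}Q_{6}$, $Q_{3}Q_{4}$ does. Each $Q_{i}$ is either a star $v_{i}^{*}$ with $v_{i}$ normal, or a necktie $Q_{T_{i}}$, so the argument splits by the subset $S\subseteq\{1,\dots,6\}$ of indices at which a necktie sits. The configuration enjoys an $S_{3}$-symmetry permuting the pairs $\{Q_{1},Q_{5}\},\{Q_{2},Q_{6}\},\{Q_{3},Q_{4}\}$, which cuts the number of subcases substantially.

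The all-stars case $S=\emptyset$ is handled directly. By Lemma \ref{conditions-adjacent-neckties}(1) the solid edges translate to adjacencies in $G$ and the absent dashed edges to non-adjacencies; since each of $v_{1},v_{2},v_{3}$ already has four listed neighbors and $\Delta(G)\leq 4$, its full neighborhood is pinned down, and one verifies that $T=\{v_{1},v_{2},v_{3}\}$ is an inner triangle. An enumeration of the maximal completes inside $N[v_{1}]$ gives $v_{1}^{*}=\{\{v_{1},v_{2},v_{3}\},\{v_{1},v_{2},v_{4}\},\{v_{1},v_{3},v_{6}\}\}$, but the triangle $\{v_{2},v_{3},v_{5}\}$ is also a clique of $G$ (its maximality being secured precisely by the absent dashed edges) and it meets every element of $v_{1}^{*}$. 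Hence $v_{1}^{*}\cup\{\{v_{2},v_{3},v_{5}\}\}$ is a complete of $K(G)$ strictly containing $v_{1}^{*}$, contradicting the normality of $v_{1}$.

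For $S\neq\emptyset$ the strategy is to exhibit inside Figure \ref{all-complete} a 4- or 5-vertex subconfiguration matching the hypothesis of one of the lemmas from Section 4 --- the forbidden diamond lemma (Figure \ref{forbidden-diamond}), Lemma \ref{lemma-III}, or Lemma \ref{lemma-IV} --- whose conclusion then forces one of the missing dashed edges. For example, with $S=\{3\}$ the assignment $b^{*}=Q_{1}$, $z^{*}=Q_{4}$, $a^{*}=Q_{2}$, $Q_{T}=Q_{3}$, $x^{*}=Q_{6}$ fits Lemma \ref{lemma-III} and forces $Q_{2}Q_{6}$ or $Q_{3}Q_{4}$; with $S=\{4\}$ the analogous assignment fits Lemma \ref{lemma-IV}; with $S=\{1,2\}$, $(a^{*},Q_{T_{1}},Q_{T_{2}},b^{*})=(Q_{6},Q_{1},Q_{2},Q_{3})$ exhibits the left forbidden diamond with missing edge $Q_{2}Q_{6}$; and with $S=\{1,5\}$, $(a^{*},Q_{T_{1}},b^{*},Q_{T_{2}})=(Q_{2},Q_{1},Q_{3},Q_{5})$ exhibits the right one with missing edge $Q_{1}Q_{5}$. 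By the $S_{3}$-symmetry and Lemma \ref{no-path} (which forces the necktie positions to form a matching in the solid subgraph), only finitely many patterns remain; each is handled by one of the above invocations. The main obstacle will be two-necktie configurations at non-adjacent outer positions such as $S=\{4,5\}$: if the two neckties fail to be adjacent in $K^{2}(G)$ then neither forbidden diamond is directly visible, and one must use Lemma \ref{conditions-adjacent-neckties}(3) together with the degree bound $\Delta(G)\leq 4$ to show that the combined neighborhoods in the two inner triangles force one of the dashed edges anyway.
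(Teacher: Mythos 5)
Your overall architecture (casework on which positions of the Hajós configuration carry neckties, dispatched by the Section~4 lemmas) is the same as the paper's, and the specific invocations you give do check out: $S=\{3\}$ via Lemma~\ref{lemma-III}, $S=\{4\}$ via Lemma~\ref{lemma-IV}, $S=\{1,2\}$ via the left forbidden diamond, $S=\{1,5\}$ via the right one, and three or more neckties with two of them adjacent via Lemma~\ref{no-path}. But there is a genuine gap: the case in which $Q_{1},Q_{2},Q_{3}$ are all stars and \emph{all three} of $Q_{4},Q_{5},Q_{6}$ are neckties is not covered by any of your invocations, and your catch-all claim that ``each remaining pattern is handled by one of the above'' fails exactly here. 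Lemmas~\ref{lemma-III} and~\ref{lemma-IV} require four star positions; the forbidden diamonds require either two neckties that are adjacent in $K^{2}(G)$ or two stars both adjacent to both neckties, and since $Q_{4},Q_{5},Q_{6}$ are pairwise non-adjacent in the solid subgraph (so Lemma~\ref{no-path} gives nothing) and each pair of them has only one common star neighbor among $Q_{1},Q_{2},Q_{3}$, neither diamond is visible. This is precisely the case on which the paper spends most of its effort: one first shows by a degree count on $a_{1}$ and $a_{2}$ (each adjacent to two vertices of each of two centers \emph{and} to the other $a_{j}$'s, with the dashed non-adjacencies forcing $a_{j}\notin T_{i}$ for the dashed pairs) that two of the three center triangles must intersect, and then Lemmas~\ref{intersecting-inner-triangles} and~\ref{intersecting-inner-in-two} show that the common star neighbor $a_{3}$ would have to lie in $T_{1}\cap T_{2}$ and hence could not have two normal neighbors. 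Nothing in your sketch supplies this argument. Relatedly, the ``main obstacle'' you flag, $S=\{4,5\}$, is not actually an obstacle: Lemma~\ref{lemma-IV} with $Q_{T}=Q_{4}$ and $(b^{*},a^{*},c^{*},x^{*})=(Q_{1},Q_{2},Q_{3},Q_{6})$ dispatches it, since all four of those positions are stars there.

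A smaller point: in the all-stars case your enumeration $v_{1}^{*}=\{\{v_{1},v_{2},v_{3}\},\{v_{1},v_{2},v_{4}\},\{v_{1},v_{3},v_{6}\}\}$ tacitly assumes $v_{4}\not\sim v_{6}$, which the Hajós hypothesis does not forbid (only the three dashed non-adjacencies are given). If $v_{4}\sim v_{6}$ then $\{v_{1},v_{4},v_{6}\}$ is a further clique in $v_{1}^{*}$ missed by $\{v_{2},v_{3},v_{5}\}$, so your extension argument breaks; one must instead observe that in that situation $v_{2}$ or $v_{3}$ fails to be normal. The paper's own one-line treatment of this subcase is similarly terse, so this is a shared, fixable blemish rather than a flaw specific to your approach, but it should be addressed in a complete write-up.
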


\begin{proof}
Suppose there is a subgraph with the solid edges of the form in Figure \ref{all-complete}, but without the dashed edges.  We consider several cases:
\begin{enumerate}
\item Suppose first that \(Q_{1},Q_{2},Q_{3}\) are stars, and set  \(Q_{i}=a_{i}^{ * }\) for \(i=1,2,3\). As a subcase, consider when \(Q_{4},Q_{5},Q_{6}\) are also stars. Then there is a subgraph of \(G\) isomorphic to the one in Figure \ref{all-complete}, whose vertices are normal vertices. However, then the triangle \(T=\{a_{1},a_{2},a_{3}\}\) is inner. Since \(a_{1}\) is a normal vertex, there must be \(q\in K(G)\) such that \(q\in a_{1}^{ * }\) but \(q\not\in Q_{T}\). Since \(q\subseteq N[a_{1}]=\{a_{2},a_{3},a_{4},a_{6}\}\), we must have \(q=\{a_{1},a_{4},a_{6}\}\), and so \(a_{4}\) is adjacent to \(a_{6}\). Similarly, considering that \(a_{2}\) is a normal vertex we obtain that \(a_{4}\sim a_{5}\), and considering \(a_{3}\) we obtain that \(a_{5}\sim a_{6}\). But then this would imply that \(G\) is an octahedron. Hence, this subcase is not possible. Now, consider the subcase when one of \(Q_{4},Q_{5},Q_{6}\) is a necktie. Without loss, suppose that \(Q_{4}\) is a necktie. By Lemma \ref{lemma-IV}, neither \(Q_{5}\) nor \(Q_{6}\) are stars. Pick then inner triangles \(T_{1},T_{2},T_{3}\) such that \(Q_{4}=Q_{T_{3}}\), \(Q_{5}=Q_{T_{1}}\) and \(Q_{6}=Q_{T_{2}}\). We claim that one of \(T_{1}\cap T_{2}\), \(T_{2}\cap T_{3}\), \(T_{1}\cap T_{3}\) must be non-empty. Suppose that \(T_{1}\cap T_{3}=\emptyset\) and \(T_{2}\cap T_{3}=\emptyset\). Since \(a_{1}\) is a neighbor of two vertices of \(T_{2}\) and of two vertices of \(T_{3}\), and of \(a_{2}\) and of \(a_{3}\), we must have that \(a_{2}\in T_{3}\) and \(a_{3}\in T_{2}\). Similarly, considering the neighbors of \(a_{2}\), we get that \(a_{1}\in T_{3}\) and \(a_{3}\in T_{1}\). This means that \(T_{1}\cap T_{2}\ne\emptyset\). This proves our claim, so without loss, assume that \(T_{1}\cap T_{2}\ne\emptyset\). If \(|T_{1}\cap T_{2}|=1\), we are in the situation of Figure \ref{intersection-in-one}. In this case, one can verify that the only normal vertex such that its star is a neighbor in \(K^{2}(G)\) of both \(Q_{T_{1}}\) and \(Q_{T_{2}}\) is the vertex in \(T_{1}\cap T_{2}\), then \(T_{1}\cap T_{2}=\{a_{3}\}\). But then none of the neighbors of \(a_{3}\) is a normal vertex. So we are left with the possibility that \(|T_{1}\cap T_{2}|=2\), that is, the situation of Figure \ref{intersection-in-two}. But we obtain that this subcase is not possible either, since, given that \(a_{3}\) is a normal vertex with \(a_{3}^{*}\sim Q_{T_{1}}\) and \(a_{3}^{ * }\sim Q_{T_{2}}\), it must happen that \(a_{3}\in T_{1}\cap T_{2}\). But then \(a_{3}\) cannot have two different neighbors that are normal vertices.
\item Suppose now that \(Q_{1}\) is a necktie, and \(Q_{2},Q_{3}\) are stars. Considering the graph to the right of Figure \ref{forbidden-diamond}, we obtain that \(Q_{5}\) must be a star. Considering the graph to the left of Figure \ref{forbidden-diamond}, we obtain that \(Q_{6}\) must also be a star. But by Lemma \ref{lemma-III}, this is not possible.
\item Suppose now that \(Q_{1},Q_{2}\) are neckties, and \(Q_{3}\) is a star. By Lemma \ref{no-path}, \(Q_{5}\) cannot be a necktie. But by the graph to the left of Figure \ref{forbidden-diamond}, \(Q_{5}\) cannot be a star either.
\item The case when \(Q_{1},Q_{2},Q_{3}\) are all neckties, is also impossible by Lemma \ref{no-path}.
\end{enumerate}

This finishes the proof of the Main Theorem.
\end{proof}

\begin{corollary}
If \(G\) is a connected, divergent graph, that is different from the octahedron, then \(G\) has a vertex with degree at least five.
\end{corollary}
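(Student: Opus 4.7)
The plan is to observe that the corollary is simply the contrapositive of the Main Theorem, so no new argument is needed. First I would unpack the logical structure: the Main Theorem has hypothesis ``$G$ connected, $\Delta(G)\leq 4$, $G$ not the octahedron'' and conclusion ``$G$ is convergent''. Its contrapositive reads ``if $G$ is connected, not the octahedron, and divergent, then $\Delta(G)\geq 5$'', which is exactly the statement to be proved.

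Concretely, I would assume for contradiction that $G$ is connected, divergent, different from the octahedron, yet every vertex of $G$ has degree at most four, i.e.\ $\Delta(G)\leq 4$. Then $G$ satisfies all the hypotheses of the Main Theorem, so $G$ is convergent, contradicting our assumption. Hence $G$ must contain a vertex of degree at least five.

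There is no real obstacle here, since the corollary is a purely formal consequence; the entire technical content has already been carried out in the proof of the Main Theorem via Theorem~\ref{helly-convergent} and the structural analysis of neckties. The only minor point worth noting in the writeup is that connectedness is preserved under the contrapositive because it appears in both hypothesis and conclusion, so we are not claiming anything about disconnected divergent graphs.
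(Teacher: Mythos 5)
Your proposal is correct and matches the paper's intent exactly: the corollary is stated without proof precisely because it is the contrapositive of the Main Theorem, which is the argument you give.
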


\bibliographystyle{plain}
\bibliography{lowdegree}
\end{document}